\newcommand{\Z}{\mathbb{Z}}
\newcommand{\Q}{\mathbb{Q}}
\newcommand{\F}{\mathbb{F}}
\newtheoremstyle{th}
{4pt}{5pt}      
{\it}           
{}              
{\bf }          
{:}             
{.5em}          
{}              
\theoremstyle{th} 
\newtheorem{theorem}{Theorem}[section]
\newtheorem{thmIntro}{Theorem}
\newtheorem{LemmaIntro}{Lemma}
\newtheorem{definition}[theorem]{Definition}
\newtheorem{remark}[theorem]{Remark}
\newtheorem{corollary}[theorem]{Corollary}
\newtheorem{proposition}[theorem]{Proposition}
\newtheorem{fact}[theorem]{Fact}
\title{The free group has the dimensional order property}
\date{\today}
\author{Anand Pillay\thanks{Partially supported by NSF grant DMS-1360702} \and Rizos Sklinos\thanks{supported by the 
LABEX MILYON (ANR-10-LABX-0070) of Universit\'{e} de Lyon, within the program "Investissements d'Avenir" (ANR-11-IDEX-0007) operated by the French National Research Agency (ANR).}}
\begin{document}

\maketitle

\begin{abstract} We prove that the common theory $T_{fg}$ of nonabelian free groups has the 
{\em dimensional order property}, or DOP,  implying, for example, that there is no reasonable structure theorem for  $\aleph_{1}$-saturated models of $T_{fg}$. 
\end{abstract}

\section{Introduction} 
By the work of Sela \cite{Sela} and Kharlampovich-Myasnikov \cite{KM},  all nonabelian free groups are elementarily equivalent (as structures in the group language) 
and we denote the common (complete)  first order theory by $T_{fg}$.  For some time we have been suggesting that ``algebraic geometry over the free group'' should be  
the study of the category $Def(T_{fg})$ of definable sets in the free group. 
In a major piece of work \cite{Sela-stability}, Sela proved that  $T_{fg}$ is a {\em stable theory}.  This gives a really new kind of stable theory (or group), and 
there are a host of notions, properties, and invariants, that one can ask about.    The issue has been raised of what groups are definable 
(or more generally interpretable) in a free group  \cite{PPST},\cite{FieldsByronSklinos}. The latter paper succeeds in proving the conjecture 
that no infinite field is definable in the free group. The same paper also shows that centralizers in a free group 
are cyclic groups with no additional induced structure. The latter statement and other results from \cite{FieldsByronSklinos} will be heavily used in the current paper.

A basic invariant of a (complete) first order theory $T$ is the category $Mod(T)$ of models of $T$ (with elementary embeddings) and the focus of much work 
especially in \cite{Shelah-book} was the problem of computing the possible functions $I(\lambda, T)$ = ''number of models, up to isomorphism, of $T$ of cardinality $\lambda$``, 
as $T$ varies, and where possible {\em describing} the models of $T$. 
Now $T_{fg}$ being nonsuperstable has maximum spectrum function, namely $I(\lambda, T) = 2^{\lambda}$ for any $\lambda > \aleph_{0}$.  
But within the class of (countable) stable, nonsuperstable, theories, there is also the possibility of describing or classifying the $\aleph_{1}$-saturated models.  
The {\em dimensional order property} (DOP), which will be formally defined in Section \ref{Stability}, rules out such a classification, and implies that for typical uncountable $\lambda$,  
$T$ has $2^{\lambda}$ $\aleph_{1}$-saturated models of cardinality $\lambda$.  So we prove:

\begin{thmIntro} \label{Maintheorem} 
The common theory of nonabelian free groups has the dimensional order property. 
\end{thmIntro}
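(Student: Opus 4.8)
The plan is to exhibit a DOP configuration explicitly, leaning on Sela's structure theory for $T_{fg}$ — the description of elementary embeddings by hyperbolic towers and of forking via the JSJ/modular data — together with the rigidity results of \cite{FieldsByronSklinos}. Work inside a sufficiently saturated $\mathfrak{C}\models T_{fg}$ and fix an $a$-model $M_0\prec\mathfrak{C}$. Choose $c,d\in\mathfrak{C}$ each realizing the generic type over $M_0$ (the type of a primitive element), with $c$ and $d$ independent over $M_0$; then $\langle M_0,c\rangle=M_0\ast\langle c\rangle$ and $\langle M_0,d\rangle=M_0\ast\langle d\rangle$, and adjoining a free generator is a trivial floor, so $M_0\prec\langle M_0,c\rangle$ and $M_0\prec\langle M_0,d\rangle$. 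Let $M_1$ be the $a$-prime model over $M_0\cup\{c\}$ and $M_2$ the $a$-prime model over $M_0\cup\{d\}$; since each is dominated over $M_0$ by the adjoined element, $M_1$ and $M_2$ are independent over $M_0$. Let $M$ be the $a$-prime model over $M_1\cup M_2$. This will be the base.

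Now fix a ``bridging'' word: let $w:=[c,d]\in\langle c,d\rangle\le M$, whose relevant features are that it is not a proper power and not primitive in $\langle c,d\rangle$, while involving $c$ and $d$ inseparably. Consider $N:=\langle M,u,v\mid [u,v]=w\rangle$, i.e.\ $M$ with a once-punctured-torus surface flat glued along $w$. Since $w$ is non-power and non-primitive and the flat is non-exceptional, and since $N$ retracts (non-degenerately) onto $M$ via $u\mapsto c,\ v\mapsto d$, the hyperbolic-floor criterion gives $N\models T_{fg}$ and $M\prec N$. Put $p:=\operatorname{tp}(u/M)\in S(M)$, the type of the new essential curve on the flat; it is non-algebraic and regular, contributing a single modular dimension. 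The theorem follows once
\[
p\perp M_1\qquad\text{and}\qquad p\perp M_2 .
\]

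Verifying this orthogonality is the heart of the argument, and I expect it to be the main obstacle, orthogonality being a global condition not visible formula-by-formula. For $p\perp M_1$ (the $M_2$ case is symmetric): suppose not, so there are $N'\succeq M$ and a type $r$ over $N'$ not forking over $M_1$ with $r\not\perp p$. Using the analysis of types over $a$-models of $T_{fg}$ — each type is, up to non-orthogonality, built from its generic/free-rank part and its modular/surface-curve part, and surface-curve types over elements $z$ and $z'$ are non-orthogonal only when $z$ and $z'$ are modularly related (conjugate up to the relevant modular group action, i.e.\ realizable as essential curves on a common surface) — one reduces to the existence of some $z$ with $\operatorname{tp}(z)$ not forking over $M_1$ whose surface-curve type is non-orthogonal to that over $w$. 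But modular relatedness is rigid: it is pinned down by conjugacy and JSJ data, and here \cite{FieldsByronSklinos} is decisive — centralizers are cyclic with no induced structure and no infinite field is interpretable, so no auxiliary definable relation can recover the modular position of $w=[c,d]$ from $M_1$ alone. Pushing this through forces $d$ into the (modular) algebraic closure of $M_1$ together with data independent from $d$ over $M_0$, contradicting that $d$ is independent from $M_1$ over $M_0$. Hence $p\perp M_1$, and symmetrically $p\perp M_2$.

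Finally, the configuration $M_0\prec M_1,M_2$ with $M_1,M_2$ independent over $M_0$, $M$ $a$-prime over $M_1\cup M_2$, and $p\in S(M)$ orthogonal to both $M_1$ and $M_2$ is precisely a witness to the dimensional order property. Shelah's theory then yields, for all suitable $\lambda$, $2^{\lambda}$ pairwise non-isomorphic $\aleph_1$-saturated models of $T_{fg}$ of cardinality $\lambda$, and in particular excludes any structure theorem for the $\aleph_1$-saturated models, as claimed.
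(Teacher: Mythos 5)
Your configuration $(M_0,M_1,M_2,M)$ is essentially the one the paper uses, but the proof has a genuine gap exactly where you predict it: the verification that $p\perp M_1$ and $p\perp M_2$. The step ``each type over an $a$-model of $T_{fg}$ is, up to non-orthogonality, built from its generic/free-rank part and its modular/surface-curve part, and surface-curve types over $z$ and $z'$ are non-orthogonal only when $z$ and $z'$ are modularly related'' is not a theorem available in the literature; no such classification of types (or of the non-orthogonality classes of regular types) in $T_{fg}$ is known, and it would itself be a major result. The subsequent steps --- ``modular relatedness is rigid,'' ``no auxiliary definable relation can recover the modular position of $w=[c,d]$,'' ``pushing this through forces $d$ into the modular algebraic closure of $M_1$'' --- are not arguments but restatements of what needs to be proved, and the cited facts from \cite{FieldsByronSklinos} (no infinite definable field, centralizers have no induced structure) do not by themselves say anything about orthogonality of surface-curve types. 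There are also smaller unsupported assertions (regularity of $p:=\operatorname{tp}(u/M)$; that the once-punctured-torus gluing along $[c,d]$ yields $M\prec N$ is plausible by Sela--Perin but is stated, not checked), but these are secondary: without a usable handle on non-orthogonality for your $p$, the proof does not close.

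The paper's way around this obstacle is to choose $p$ so that orthogonality \emph{can} be controlled by known results: it takes $p$ to be (the nonforking extension over $M_3$ of) the generic type of $C(e_1e_2)^{0}$. By Corollary 6.27 of \cite{FieldsByronSklinos}, centralizers are stably embedded, of $U$-rank $1$, and $1$-based; hence non-orthogonality of the generics of $C(e_1)^0$ and $C(e_2)^0$ would produce a type-definable rank-one subgroup of $C(e_1)\times C(e_2)$ which must be the graph of a definable isomorphism, and after adjusting by finite index this yields a definable isomorphism $e_1^{n}\mapsto e_2^{n}$ between $\langle e_1\rangle$ and $\langle e_2\rangle$ in the standard model $\F$. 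The whole problem is thereby reduced to a concrete non-definability statement, which is then refuted by the test-sequence/closure machinery for cyclic towers (Theorem 6.33 of \cite{FieldsByronSklinos}). If you want to salvage your approach, the most direct fix is to replace your surface-curve type by this centralizer generic; otherwise you must supply a genuine proof of the orthogonality dichotomy for surface types that you are invoking.
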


It is worth noting that there {\em is} a classification of the $\aleph_{1}$-saturated models of the theory $\mathcal{T}h(\Z,+)$ of the free group on one generator: 
any such model is of the form ${\widehat \Z} \oplus \Q^{(\kappa)}$, where ${\widehat \Z}$ is the profinite completion of $\Z$, and $\kappa \geq \aleph_{1}$.  
In particular for $\kappa > 2^{\aleph_{0}}$ there is a unique $\aleph_{1}$-saturated model of cardinality $\kappa$.  
We had wondered for some time about whether there is a reasonable description of the $\aleph_{1}$-saturated models of the common theory 
of nonabelian free groups, and our main theorem implies a negative answer.

We will prove that $T_{fg}$ has the DOP by showing:

\begin{LemmaIntro} \label{MainLemma}
Let $a,b\in \F$ be independent generics. Then their centralizers 
$C(a)$, $C(b)$ are orthogonal. 
\end{LemmaIntro}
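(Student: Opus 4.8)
The plan is to reduce the statement to a purely combinatorial assertion about definable subsets of $C(a)\times C(b)$, and then to establish that assertion by feeding the independence of $a$ and $b$ into the analysis of definable sets in free groups from \cite{FieldsByronSklinos}. For the reduction, note first that a generic element of the free group is not a proper power, so $C(a)$ and $C(b)$ are the definable subgroups $\langle a\rangle$ and $\langle b\rangle$, and by the theorem of \cite{FieldsByronSklinos} each of them is a pure model of $\mathrm{Th}(\Z,+)$ which is stably embedded and carries no induced structure beyond that of the abelian group $(\Z,+)$; in particular each is $1$-based. Using the definition of orthogonality recalled in Section \ref{Stability}, together with the standard fact that for stably embedded definable sets orthogonality is equivalent to the absence of any definable interaction between them, it suffices to show: \emph{every definable subset $R\subseteq C(a)^{n}\times C(b)^{m}$, with parameters allowed anywhere in $\F$, is a finite Boolean combination of rectangles $X\times Y$ with $X\subseteq C(a)^{n}$ and $Y\subseteq C(b)^{m}$ definable.} What has to be excluded is already visible in the simplest case, a definable (virtual) isomorphism between $C(a)$ and $C(b)$; and such a thing genuinely does exist when $a$ and $b$ are \emph{not} independent --- if $b=waw^{-1}$ then conjugation by $w$ gives one, and correspondingly $\langle a,b\rangle$ is then not a free factor --- so the independence hypothesis must enter in an essential way.

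For the main step, observe that orthogonality depends only on $\mathrm{tp}(a,b)$, and that $a,b$ being independent generics forces $\mathrm{tp}(a,b)$ to be the type of a pair of elements extending to a basis (this repackaging of ``independent generic'' follows from stability together with the description of the generic type, cf. \cite{PPST},\cite{FieldsByronSklinos}); so we may assume $\langle a,b\rangle=\langle a\rangle * \langle b\rangle$ is a free factor. Now run the machinery of \cite{FieldsByronSklinos} --- the description of definable sets in free groups through cyclic JSJ decompositions and Sela's work, which is exactly what powers the proof there that centralizers have no extra structure --- on the set $R$. Since $a$ and $b$ sit in distinct free factors, no JSJ path links them, and tracing this through the analysis forces the family of fibres $\{\,R(\,\cdot\,,\bar d):\bar d\in C(b)^{m}\,\}$ to be finite, with definable fibre partition of $C(b)^{m}$ (each fibre being a coset combination in $C(a)^{n}$, as $C(a)$ is $1$-based, whose defining data cannot genuinely vary with $\bar d$, and symmetrically); writing $R$ as the union of the partition pieces crossed with their common fibres exhibits it as a Boolean combination of rectangles.

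The main obstacle is precisely this last step: upgrading ``a single centralizer has no extra induced structure'' to ``two independent centralizers have no definable interaction''. Essentially all of the work sits there; it cannot be bypassed by soft model theory, since the conclusion is false without independence, and the only tool fine enough to detect the difference is the JSJ/definable--set technology of \cite{FieldsByronSklinos}. The delicate points are (i) extracting from that machinery the finiteness of the fibre family, and (ii) doing so uniformly in the parameters of $R$, which is what makes the combinatorial statement strong enough to yield orthogonality over arbitrary base sets rather than merely over $\mathrm{acl}(a,b)$.
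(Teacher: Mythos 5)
Your first paragraph is essentially the right reduction, and it is close to what the paper does: by Fact \ref{AllinOne}(iv) the centralizers are stably embedded and $1$-based of $U$-rank $1$, and $1$-basedness (via \cite[Proposition 4.5]{Pillay-book}) converts non-orthogonality of the generic types into a connected type-definable $U$-rank-$1$ subgroup $H$ of $C(a)\times C(b)$ projecting onto both factors, i.e.\ the graph of an isomorphism between $C(a)^{0}$ and $C(b)^{0}$; compactness then yields a definable isomorphism between finite-index subgroups, and hence a definable copy of the diagonal subgroup $\langle (a,b)\rangle$ in the standard model. Note that you overshoot here: you do not need every definable subset of $C(a)^{n}\times C(b)^{m}$ to be a Boolean combination of rectangles, only to exclude this one rank-one subgroup; and your worry (ii) about uniformity in parameters is already absorbed by $1$-basedness, which places the offending subgroup over $acl(a,b)$.

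The genuine gap is the second step, and you have in effect conceded it yourself: ``run the machinery of \cite{FieldsByronSklinos} \dots no JSJ path links them \dots tracing this through the analysis forces the family of fibres to be finite'' is not an argument, and ``essentially all of the work sits there'' is an accurate self-assessment. The paper does \emph{not} rerun the JSJ analysis on an arbitrary relation $R$; it applies one concrete quoted result (Theorem 6.33 of \cite{FieldsByronSklinos}, on extending test sequences to closures of cyclic towers) to the single formula $\phi(x,y)$ that would define the diagonal $\{(e_1^{n},e_2^{n})\}$. Concretely: $\phi$ is finite-to-one in $x$, the test sequence $h_n(z)=e_2^{n}$ on the tower $\langle \F,z \mid [z,e_2]\rangle$ has witnesses $e_1^{n}$, so the theorem produces a word $w(d,z,\bar e)$ in a closure whose specializations $h'_n(w)$ must equal $e_1^{k_n}$; but $h'_n(d)$ and $h'_n(z)$ are powers of $e_2$ and the letter $e_1$ occurs only a bounded number of times in $w$, so $h'_n(w)$ cannot be $e_1^{k_n}$ for large $n$. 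You would need to supply this argument (or an equivalent one) for your proposal to close; as written, the decisive non-definability claim is asserted, not proved.
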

 
It is well-known that Lemma \ref{MainLemma} implies the DOP but we nevertheless give details of the reduction (and more) in Section \ref{Stability}. 

The proof of Lemma \ref{MainLemma} makes use of recent results in \cite{FieldsByronSklinos}. In fact there is a further reduction, using some geometric stability,  
to proving that the centralizers $C(a)$, $C(b)$ are not definably isomorphic, and the latter statement is what is 
actually  proved in Section \ref{CyclicTowers}. 

The question of whether $T_{fg}$ has the DOP was raised by the first author when  the second author was his Ph.D. student in Leeds. 

In the remainder of the introduction we recall some general facts about the model theory of the free group. In Section \ref{Stability} we discuss stability and the DOP property. 
In Section \ref{CyclicTowers} the main technical result is proved and we will introduce there the required machinery. 

We will assume some familiarity with model theory and stability, although in Section \ref{Stability} we will recall details of some classification-theoretic notions. 

As above, $T_{fg}$ denotes the common theory of nonabelian free groups, and is complete. We typically let $\F$ denote a standard model, 
namely a free group $\F_{k}$ of rank $k$ with $k\geq 2$.  Since $T_{fg}$ is stable, stable group theory applies. 
We let $\bar M$ be a saturated model.  We recall some facts and results concerning $T_{fg}$.

\begin{fact} \label{AllinOne}
\
\begin{itemize}
 \item[(i)] The theory $T_{fg}$ is connected.
 \item[(ii)] Let $p_{0}$ denote the unique generic type of the free group over $\emptyset$. Then $p_{0} = tp(a/\emptyset)$ when $a$ is any primitive element of $\F$ (i.e. member of a free basis).
Moreover, if $a_{1},...,a_{n}\in \F$ extends to a basis of $\F$ then $a_{1},..,a_{n}$ are independent (in the sense of nonforking) realizations of $p_{0}$. 
 \item[(iii)] The proper nontrivial definable (with parameters) subgroups of $\F$ are precisely the cyclic subgroups, and hence are the finite index subgroups of centralizers.
 \item[(iv)] Let $a\in \F\setminus\{1\}$. Then $(C(a),.)$ as a  subgroup of the saturated model, is ``stably embedded" in the sense that any set of $n$-tuples 
from $C(a)$ which is definable (with parameters) in the ambient structure $\bar M$ is definable (with parameters) in the structure  $(C(a),.)$. 
In particular $C(a)$ has $U$-rank $1$ and is locally modular (or $1$-based), as a definable group in $\bar M$. 
\end{itemize}
\end{fact}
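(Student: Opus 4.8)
The plan is to assemble Fact \ref{AllinOne} from classical free-group theory together with results already established in the model theory of $\F$, supplemented by a few short stability-theoretic remarks; I would not reprove the deep inputs from scratch. For (i), connectedness of $T_{fg}$ --- i.e.\ that $\F$ has no proper definable subgroup of finite index --- is part of the stable group theory of the free group developed following \cite{Sela-stability}, and I would cite it. Here the real content is the \emph{non}-definability of such subgroups: $\F$ does possess proper finite-index subgroups containing no primitive element (for instance $\ker(\F_2\to(\Z/2)^2)$), so an argument via the generic type $p_0$ alone does not suffice, and the finer structure of definable sets in $\F$ is needed.

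For (ii): by Nielsen's theorem $\mathrm{Aut}(\F_k)$ acts transitively on primitive elements (indeed on bases), and automorphisms preserve types over $\emptyset$, so all primitive elements of $\F$ realise a single complete $1$-type, which we call $p_0$; that $p_0$ is moreover the generic type (equivalently, $\mathrm{Stab}(p_0)=\F$) belongs to the same development of the stable group theory of $\F$, which I would cite. For the independence clause I would use the elementary chain $\F_n\preceq\F_{n+1}\preceq\cdots$, so that a basis of $\F_n$ already sits inside highly saturated models, together with the known analysis of forking and homogeneity in $\F$ (Perin--Sklinos; Pillay): adjoining one further basis element to a partial basis is a nonforking step, whence $a_1,\dots,a_n$ are mutually independent.

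For (iii): given a proper nontrivial definable $H\le\F$, pick $1\ne h\in H$; by classical free-group theory $C_\F(h)$ is infinite cyclic (generated by the root of $h$), so $\langle h\rangle$ has finite index in $C_\F(h)$. Thus ``definable $\Rightarrow$ cyclic'' and the closing ``hence'' of (iii) both reduce to the one substantive assertion that every proper nontrivial definable subgroup of $\F$ \emph{is} cyclic, which I would cite, since it rests on the Sela/Bestvina--Feighn JSJ and shortening machinery (see \cite{PPST} and the references there). The reverse direction is immediate: every finite-index subgroup of $C(a)\cong\Z$ is infinite cyclic, and is definable in $\F$ by (iv) since it corresponds to a subgroup $n\Z$ definable in the pure group $(C(a),\cdot)\cong(\Z,+)$.

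For (iv): the assertion that $(C(a),\cdot)$ is stably embedded with induced structure exactly that of the pure group $(\Z,+)$ is precisely the theorem of \cite{FieldsByronSklinos} recalled in the introduction, which I would invoke directly. Granting it, $C(a)$ has $U$-rank $1$ because $\mathrm{Th}(\Z,+)$ does, and $C(a)$ is locally modular/$1$-based because $(\Z,+)$ is a module and pure modules are $1$-based, so a stably embedded copy of it inside $\bar M$ is a $1$-based definable group. Consequently the main obstacle is not the assembly above but the two imported black boxes --- the classification of definable subgroups used in (iii) and the triviality of the induced structure on centralisers used in (iv); both are deep results (of \cite{PPST} and \cite{FieldsByronSklinos} respectively, building on Sela's work), which I would cite rather than reprove.
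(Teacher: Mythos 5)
Your proposal matches the paper's treatment: the paper offers no proof of this Fact, only attributions --- (i) and (ii) to \cite{Pillay-forking} (with the remark that they also follow from Poizat's work on $\F_{\omega}$), (iii) to Theorem 3 of \cite{PPST}, and (iv) to Corollary 6.27 of \cite{FieldsByronSklinos} --- and you cite essentially the same sources while supplying correct glue for the elementary parts (transitivity of $\mathrm{Aut}(\F_k)$ on primitive elements, cyclic subgroups having finite index in the centralizer of a generator, and the transfer of $U$-rank $1$ and $1$-basedness from the pure group $(\Z,+)$). The one nitpick is that the definability in $\F$ of the finite-index subgroups of $C(a)$ is immediate from the formula $\exists y\,([y,a]=1 \wedge x=y^{n})$ rather than a consequence of (iv), since stable embeddedness only passes from ambient-definable to internally definable, not conversely; the conclusion is unaffected.
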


Facts (i) and (ii) appear in \cite{Pillay-forking} (although a quick proof follows from results of Poizat \cite{Poizat} on $\F_{\omega}$),  
Fact (iii) is Theorem 3 of  \cite{PPST}, and Fact (iv) is Corollary 6.27 of \cite{FieldsByronSklinos}.

\section{Stability}\label{Stability}
The aim of this section is to give more precise details about the DOP and  how we will plan to prove it in the case at hand.

The book \cite{Shelah-book} is the basic reference for classification theory and associated notions. But we  also refer the reader to 
Section 4 of Chapter 1 of \cite{Pillay-book}, where there is an account of notions such as $a$-model, domination, weight, etc., which we summarize here. 

The capital letter $T$ will denote a countable, complete, stable theory, and we often work in a saturated model $\bar M$.  

What we call (following Makkai \cite{Makkai-basic}) an $a$-model in \cite{Pillay-book} is what Shelah 
calls  an $F^{a}_{\kappa (T)}$-saturated model. Recall that $\kappa(T)$ is the smallest infinite cardinal $\kappa$, for which there is no chain $\{p_{\alpha}(x)\in S(A) \ | \ \alpha<\kappa\}$ 
of complete types such that for all $\alpha<\beta<\kappa$, $p_{\beta}$ is a nonforking extension of $p_{\alpha}$. When $T$ is superstable 
$\kappa (T) = \aleph_{0}$, and an $a$-model is just a model in which all strong types over finite sets are realized. When $T$ 
is nonsuperstable, as is the case of $T_{fg}$, $\kappa (T) = \aleph_{1}$, and an $a$-model is precisely a  model in which all strong types over countable sets are realized, 
which just amounts to being $\aleph_{1}$-saturated.  

If $p$ over $A$ and $q$ over $B$ are stationary types, they are said to be {\em  orthogonal} if for any $C\supseteq A\cup B$, the 
nonforking extensions $p|C$ of $p$ and $q|C$ of $q$ over $C$, are almost orthogonal in the sense that if $a$ realizes $p|C$ and $b$ realizes $q|C$ then 
$a$ is independent from $b$ over $C$. Note that  almost orthogonality for the stationary types $p|C$, $q|C$ is equivalent to saying that 
$(p|C)(x) \cup( q|C)(y)$ determines a complete type $r(x,y)$ over $C$.  

There is also the notion of {\em orthogonality to a set}. The stationary type $p(x)\in S(A)$ is said to be orthogonal to a set $B$ if $p$ is orthogonal 
to every strong type over $B$. This is also characterized as follows:  Let $A'$ realize $stp(A/B)$ such that $A'$ is independent from $A$ over $B$. 
Then $p$ is orthogonal to the copy $p'$ of $p$ over $A'$. 

There are prime models in the category of $a$-models, and the corresponding notion of isolation is closely related to domination. 
For example, suppose $M$ is an $a$-model, $c$ a tuple, and $M_{1}$ is $a$-prime over $Mc$. Then $c$ {\em dominates} $M_{1}$ over $M$, 
namely whenever $c$ is independent from a set $B$ over $M$, then $M_{1}$ is independent from $B$ over $M$. 

\begin{definition}\label{DefDOP} 
$T$ has the dimensional order property, or DOP, if there are $a$-models $M_{0}, M_{1}, M_{2}, M_{3}$ and $p(x)\in S(M_{3})$ such that:
\begin{itemize}
 \item[(i)] The model $M_{0}$ is contained in $M_1$ and in $M_2$, moreover $M_{1}$ is independent from $M_{2}$ over $M_{0}$.
 \item[(ii)] The model $M_{3}$ is $a$-prime over $M_{1}\cup M_{2}$. 
 \item[(iii)] The type $p$ is orthogonal to both $M_{1}$ and $M_{2}$. 
\end{itemize}

\end{definition}

For superstable $T$ the DOP is a Shelah-style dividing line for $a$-models, in the sense that assuming DOP gives a nonstructure theorem (many $a$- models) 
and assuming NDOP one obtains a structure theorem (any $a$-model is $a$-prime over a suitable tree of small models).  
This leads to the so-called Main Gap for $a$-models of superstable theories, see \cite{Shelah-book}, \cite{Harrington-Makkai}. 
For stable, nonsuperstable theories, we have the nonstructure theorem \cite{Shelah-book}:

\begin{fact} 
Suppose $T$ is nonsuperstable and has DOP. Then for any uncountable $\lambda$ such that $\lambda = \lambda^{\omega}$, $T$ has $2^{\lambda}$ $\aleph_{1}$-saturated models of cardinality $\lambda$. 
\end{fact}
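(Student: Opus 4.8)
The plan is to run Shelah's DOP nonstructure argument in the form appropriate for stable, nonsuperstable $T$. Recall that here $\kappa(T)=\aleph_1$, so the $a$-models are exactly the $\aleph_1$-saturated models, and there is a good theory of $a$-prime models over independent systems (with domination and weight, as summarized in the excerpt). The role of DOP is to attach, to each pair of $a$-models independent over a common base, a ``new'' stationary type orthogonal to each of the two sides; iterating this attachment along a tree produces a family of $\aleph_1$-saturated models carrying a system of dimension invariants that an isomorphism cannot rearrange, and counting the possible invariant-systems yields $2^\lambda$ models once $\lambda=\lambda^\omega$.

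\emph{Construction.} Fix a DOP witness: $a$-models $M_0\prec M_1,M_2$ with $M_1,M_2$ independent over $M_0$, an $a$-prime model $M_3$ over $M_1\cup M_2$, and $p\in S(M_3)$ orthogonal to $M_1$ and to $M_2$. By taking small elementary submodels and normalizing one may assume $|M_i|=2^{\aleph_0}\le\lambda$ and, passing to a suitable hull of $M_3$, that $p$ is regular (or at least that the dimension of its nonforking extension in any $a$-model over $M_3$ is a well-defined cardinal). Using $\lambda=\lambda^\omega$, the tree $I={}^{<\omega}\lambda$ has $\le\lambda$ nodes and its set of branches ${}^{\omega}\lambda$ also has size $\lambda$. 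Build an independent tree of $a$-models $\langle N_\eta : \eta\in I\rangle$, each $N_\eta$ of size $2^{\aleph_0}$, with $N_\eta\prec N_{\eta\frown i}$, the family $\{N_{\eta\frown i} : i<\lambda\}$ independent over $N_\eta$, and each step $N_\eta\prec N_{\eta\frown i}$ an isomorphic copy of $M_0\prec M_1$. For a pair $\{\eta,\nu\}$ of distinct branches meeting at level $n$, the models $N_{\eta\restriction(n+1)}$ and $N_{\nu\restriction(n+1)}$ are independent over $N_{\eta\restriction n}$, so DOP yields a copy $p_{\{\eta,\nu\}}$ of $p$ over an $a$-prime model over them, orthogonal to both sides; realize $p_{\{\eta,\nu\}}$ with a prescribed dimension $\mu_{\{\eta,\nu\}}\in\{0,\lambda\}$ determined by a coloring $c\colon [{}^{\omega}\lambda]^2\to 2$. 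Let $M_c$ be $a$-prime over the union of all the $N_\eta$ together with all the chosen realizations. The generating set has size $\le\lambda\cdot 2^{\aleph_0}+\lambda=\lambda$, so $|M_c|=\lambda$ (again using $\lambda=\lambda^\omega$ to bound the size of the $a$-prime model), and $M_c$ is an $a$-model, hence $\aleph_1$-saturated.

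\emph{Nonisomorphism.} Two points have to be established. First, each $\mu_{\{\eta,\nu\}}$ is recoverable from the abstract structure of $M_c$: this is where orthogonality of $p_{\{\eta,\nu\}}$ to the two sides is used, together with the domination analysis of $a$-prime models over independent systems, so that the dimension of this type is ``local'' to the pair of branches and is an isomorphism invariant. Second, an isomorphism $M_c\cong M_{c'}$ must carry a tree decomposition of $M_c$ to one of $M_{c'}$ (by uniqueness of $a$-prime models and the rigidity of such a decomposition up to the symmetries of the index tree), hence induce an automorphism $\sigma$ of $I$ with $c$ and $c'\circ\sigma$ agreeing up to a bounded discrepancy. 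One then invokes the combinatorial fact that there is a family of $2^\lambda$ colorings $[{}^{\omega}\lambda]^2\to 2$ no two of which are identified by any automorphism of ${}^{<\omega}\lambda$ (a free-set/diagonalization argument on the tree). The corresponding models $M_c$ are pairwise nonisomorphic, giving $2^\lambda$ $\aleph_1$-saturated models of cardinality $\lambda$.

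The main obstacle is the passage from the superstable DOP argument to the nonsuperstable one: one must check that independent trees of $a$-models and their $a$-prime hulls behave as required when $\kappa(T)=\aleph_1$ (so that the ``small'' building blocks already have size $2^{\aleph_0}$), that the dimensions of the attached orthogonal types remain genuine isomorphism invariants in this setting, and that $\lambda=\lambda^\omega$ is exactly the hypothesis keeping $|M_c|=\lambda$ while still leaving room for $2^\lambda$ inequivalent colorings. All of this is carried out in \cite{Shelah-book}; the above only indicates the shape of that argument.
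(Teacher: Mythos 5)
The paper offers no proof of this Fact at all: it is quoted as a known nonstructure theorem with a bare citation to \cite{Shelah-book}, and that citation is the entire justification. Your proposal correctly identifies the shape of the argument behind that citation (attaching dimensions of DOP-orthogonal types to pairs in an independent system, with $\lambda=\lambda^{\omega}$ controlling both the size of the $a$-prime hull and the counting), but since you explicitly defer every technical point --- invariance of the dimensions, rigidity of the decomposition, the combinatorics of colorings modulo tree automorphisms --- to \cite{Shelah-book}, you end up in essentially the same position as the paper: the statement is used as a black box, and your sketch is a reasonable gloss on it rather than an independent proof.
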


However there is in general no nice structure theorem for $\aleph_{1}$-saturated models of nonsuperstable theories with NDOP, and the 
``Main Gap"  for $\aleph_{1}$-saturated models remains open. 

We now aim towards reducing  the proof of the main theorem in the introduction to a concrete nondefinability statement about a standard model $\F$.

\begin{proposition}\label{PropReduction}
Let $\{e_{1},e_{2}\}$ be part of a basis of $\F$. Assume that the unique isomorphism between the cyclic subgroups 
$\langle e_{1}\rangle$ and $\langle e_{2} \rangle$ (mapping $e_{1}^{n}$ to $e_{2}^{n}$, for $n\in \Z$) is not definable in $\F$.
Then $T_{fg}$ has the DOP. 
\end{proposition}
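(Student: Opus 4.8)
The plan is to verify Definition \ref{DefDOP} directly: I would build the DOP configuration from centralizers of generic elements, and feed the hypothesis in at the end through a geometric-stability argument that supplies the orthogonality the definition demands.

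First I would construct the models. Working inside a saturated $\bar M$, fix an $a$-model $M_0$ and let $c_1,c_2$ be two realizations of the generic type $p_0$ over $M_0$ that are independent over $M_0$ (possible since $T_{fg}$ is connected, so $p_0$ is stationary). Let $M_1$ be $a$-prime over $M_0c_1$, let $M_2$ be $a$-prime over $M_0c_2$, and let $M_3$ be $a$-prime over $M_1\cup M_2$. Then condition (ii) of Definition \ref{DefDOP} holds by construction, and condition (i) holds because, by the domination properties of $a$-prime models recalled in Section \ref{Stability}, $c_i$ dominates $M_i$ over $M_0$, so from the independence of $c_1$ and $c_2$ over $M_0$ one obtains that $M_1$ is independent from $M_2$ over $M_0$.

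The real content is the choice of $p$. Put $d:=c_1c_2$. Since $tp(c_1,c_2/M_0)$ is the type of two members of a basis (Fact \ref{AllinOne}(ii)) and the generic type is translation invariant by connectedness (Fact \ref{AllinOne}(i)), $d$ is again generic, in fact generic over $M_0c_1$, and $C(d)$ has $U$-rank $1$ (Fact \ref{AllinOne}(iv)). Let $p$ be the generic type of $C(d)$, stationarized over $M_3$. To prove $p$ orthogonal to $M_1$ I would use the characterization of orthogonality to a set recalled in Section \ref{Stability}: pick $M_3'$ realizing $stp(M_3/M_1)$ which is independent from $M_3$ over $M_1$, and let $c_2'\in M_3'$ be the image of $c_2$ under the corresponding elementary map. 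A short forking computation (using that each $c_i$ is independent from $M_0$ over $\emptyset$, together with transitivity of nonforking) shows that $\{c_1,c_2,c_2'\}$ is independent over $\emptyset$, so that $d=c_1c_2$ and $d':=c_1c_2'$ are, after a Nielsen transformation and Fact \ref{AllinOne}(ii), two independent generics. The copy of $p$ over $M_3'$ is the generic type of $C(d')$, so orthogonality of $p$ to $M_1$ reduces to orthogonality of $C(d)$ and $C(d')$; orthogonality of $p$ to $M_2$ reduces similarly to orthogonality of $C(c_1c_2)$ and $C(c_1'c_2)$ for an appropriate independent copy $c_1'$ of $c_1$. Thus the whole DOP configuration is reduced to Lemma \ref{MainLemma}: that the centralizers of two independent generics are orthogonal.

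It remains to deduce Lemma \ref{MainLemma} from the hypothesis, and I expect this to be the genuine obstacle. Let $a,b$ be independent generics; their type over $\emptyset$ is that of a basis pair, so I may take them to extend a basis of a standard model $\F$. Suppose toward a contradiction that $C(a)$ and $C(b)$ are non-orthogonal. As $C(a)$ and $C(b)$ are locally modular groups of $U$-rank $1$ with regular generics (Fact \ref{AllinOne}(iv)), the geometric stability theory of $1$-based groups yields an $\mathrm{acl}^{eq}(ab)$-definable subgroup of $C(a)\times C(b)$; since $C(a)$ and $C(b)$ are torsion-free with no structure beyond the group operation (Fact \ref{AllinOne}(iv) and \cite{FieldsByronSklinos}), it must be the graph of an isomorphism $mC(a)\to nC(b)$ between finite-index subgroups. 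Composing with the $\emptyset$-definable maps $x\mapsto mx$ on $C(a)$ and ``division by $n$'' on $nC(b)$ (available because $C(a)$ and $C(b)$ are elementary extensions of $(\Z,+)$), and with inversion if need be, produces a definable isomorphism $h\colon C(a)\to C(b)$ with $h(a)=b$, whose restriction to $\langle a\rangle$ is the standard isomorphism. Because models of stable theories are stably embedded (types over models are definable), transporting $h$ to $\F$ exhibits the standard isomorphism $\langle e_1\rangle\to\langle e_2\rangle$ as definable in $\F$, contrary to hypothesis; hence $C(a)\perp C(b)$, completing the proof. The delicate points are keeping the parameters of the group-configuration argument inside $\mathrm{acl}^{eq}$ of the generics, passing from an isogeny of finite-index subgroups back to the full cyclic groups, and transferring definability from $\bar M$ to $\F$; the domination bookkeeping and the identification $d'=c_1c_2'$ are routine.
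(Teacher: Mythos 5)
Your proposal is correct and follows essentially the same route as the paper: the same $a$-prime tower $M_0,M_1,M_2,M_3$, the same choice of $p$ as the (stationarized) generic of the centralizer of the product of the two generics, the same reduction of condition (iii) to orthogonality of centralizers of independent generics via an independent copy and a Nielsen-type observation, and the same use of $1$-basedness plus the finite-index/compactness step to turn non-orthogonality into a definable copy of the standard isomorphism $\langle e_1\rangle\to\langle e_2\rangle$. The only difference is organizational: you factor through Lemma \ref{MainLemma} at the end, whereas the paper proves the orthogonality claim for the basis pair first and then transfers it to the pair of products.
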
 
\begin{proof}
We consider $\F$ as an elementary substructure of a very saturated model $\bar M$. 
Let $q_{1}$ be the generic type of $C(e_{1})^{0}$ (the connected component of $C(e_1)$), which we note that it is a stationary type over $e_{1}$. 
Likewise let $q_{2}$ be the generic type of $C(e_2)^0$. 
\\ \\
{\em Claim 1.} The stationary types $q_{1}, q_{2}$ are orthogonal. 
\\
{\em Proof of Claim 1.}  Suppose otherwise. So for some model $M$ containing the data, there are $a$ realizing $q_{1}|M$ and $b$ realizing $q_{2}|M$, 
such that $a$ forks with $b$ over $M$. As both $q_{1}$, $q_{2}$ have $U$-rank $1$, $a$ and $b$ are interalgebraic over $M$, and $U(tp(a,b/M)) = 1$. 
But using Fact \ref{AllinOne}(iv), the group $C(e_{1})\times C(e_{2})$ is $1$-based  hence by \cite[Proposition 4.5, p.170]{Pillay-book},  
$tp(a,b/M)$ is the generic type of a coset of a connected type-definable (over $acl(e_1,e_2)$) subgroup $H$ of $C(e_{1})\times C(e_{2})$. Thus, $H$ itself has $U$-rank $1$. 
As $H\leq C(e_{1})^{0}\times C(e_{2})^{0}$ which is torsion-free divisible, it is clear that $H$ is the graph of an isomorphism between 
$C(e_{1})^{0}$ and $C(e_{2})^{0}$ defined over $acl(e_{1},e_{2})$.  By compactness, there are definable finite index subgroups 
$G_{1}$ of $C(e_{1})$ and $G_{2}$ of $C(e_{2})$ and a definable isomorphism $f$ between $G_{1}$ and $G_{2}$ with everything defined over $\F$.  
Looking at points in the model $\F$,  $f$ restricts to an isomorphism between $G_{1}(\F)$ and $G_{2}(\F)$ which we still call $f$. 
But $G_{1}(\F)$, being a finite index subgroup of the cyclic group $\langle e_{1}\rangle$ is precisely $\langle e_{1}^{k}\rangle$ for some $k>0$, 
and likewise $G_{2}(\F)$ is $\langle e_{2}^{\ell}\rangle$ for some $\ell > 0$, and $f$ takes $e_{1}^{k}$ to $e_{2}^{\ell}$. 

By precomposing with the isomorphism between $\langle e_{1} \rangle$ and $\langle e_{1}^{k} \rangle$ obtained by raising to the kth power, 
and postcomposing with the inverse of the analogous isomorphism between $\langle e_{2}\rangle$ and $\langle e_{2}^{\ell}\rangle$, 
gives an isomorphism between $\langle e_{1}\rangle$ and $\langle e_{2}\rangle$ definable in $\F$, contradicting the assumption of the proposition. Thus, the claim is proved.

Now let $M_{0}$ be an $a$-model independent from $e_{1}, e_{2}$. Then $e_{1}$ is independent from $e_{2}$ over $M_{0}$. 
Let $M_{1}$ be $a$-prime over $(M_{0}, e_{1})$ and $M_{2}$ $a$-prime over $(M_{0}, e_{2})$. Since $e_1$ is independent from $e_2$ over $M_0$, it follows that 
$M_1$ is independent from $M_2$ over $M_0$. Finally let $M_{3}$ be $a$-prime over $M_{1}\cup M_{2}$.  
Let $c=e_{1}\cdot e_{2}$, so $c\in M_{3}$. Let $q_{c}$ be the generic type (over $c$) of $C(c)^{0}$, 
and $r_{c}$ its nonforking extension over $M_{3}$. We will show that the tuple $(M_0, M_1, M_2, M_3, r_c)$ satisfies 
the conditions of Definition \ref{DefDOP}. By construction and the discussion above we only need to check condition (iii).  
\\ \\
{\em Claim 2.} The stationary type $r_{c}$ is orthogonal to each of $M_{1}$, $M_{2}$. 
\\
{\em Proof of Claim 2.}  We will just show that $r_c$ is orthogonal to $M_{1}$. Let $\alpha$ be an automorphism of $\bar M$ fixing $M_{1}$ pointwise, such that 
$M_{3}' = \alpha(M_{3})$ is independent with $M_{3}$ over $M_{1}$. Let $c' = \alpha(c)$, and $q_{c'}$ (over $c'$), $r_{c'}$ (over $M_{3}'$) 
be the copies under $\alpha$ of $q_{c}$ and $r_{c}$ respectively. 

So by the earlier characterization of orthogonality to a set,   we have to show that $r_{c}$ is orthogonal to $r_{c'}$. As $r_{c}$ is 
the unique nonforking extension over $M_{3}$  of $q_{c}$, and $r_{c'}$ the unique nonforking extension of $q_{c'}$ over $M_{3}'$, 
this is equivalent to showing that $q_{c}$ and $q_{c'}$ are orthogonal.  But it is easy to see that $c$ and $c'$ are independent realizations 
of the generic type $p_{0}$, whereby the group they generate is an elementary substructure of $\bar M$ isomorphic to $\F_{2}$, so by Claim 1, $q_{c}$ is orthogonal to $q_{c'}$.

The proof of Claim 2 finishes the proof of the Proposition.
\end{proof} 

\begin{remark} 
Note that the proof of the above Proposition yields that if the diagonal subgroup of $C(e_1)\times C(e_2)$ is not definable, then 
$C(e_1), C(e_2)$ are orthogonal. Moreover, as mentioned in the paragraph of the introduction after Lemma \ref{MainLemma}, the 
orthogonality of $C(e_1), C(e_2)$ is enough for proving that $T_{fg}$ has the DOP. 
\end{remark}

\section{Cyclic towers, and the proof of the main theorem}\label{CyclicTowers}
We start this section with the notion of an {\em amalgamated free product}, we refer the reader to \cite[Chapter IV]{LyndonSchupp} or to 
\cite[Section 4.4]{MagnusKarrassSolitar} for more details and motivation. We fix two groups $A,B$ a subgroup $C$ of $B$ and an embedding $f:C\to A$. 
Then the {\em amalgamated free product} $G:=A*_CB$ is the group $\langle A,B | c=f(c), \  c\in C\rangle$. Note that $G$ can be viewed as the free product 
$A*B$ quotiened by the normal subgroup containing $\{cf(c)^{-1} \ | \ c\in C\}$. This construction 
naturally arises in the context of algebraic topology for example in the Seifert - van Kampen theorem 
(see \cite[Section 1.2]{HatcherAlgTop}). 

For the rest of the section we fix a non abelian finitely generated free group $\F:=\langle \bar{e}\rangle$. For 
notational purposes, when an infinite cyclic group is denoted by a capital letter, say $C$, its generator will 
be denoted by the corresponding small letter $c$. 

\begin{definition}
Let $C$ be an infinite cyclic group. Then a cyclic tower over $\F$ is the amalgamated free product $\F*_C(C\oplus\Z)$ 
where $C$ embeds isomorphically onto a maximal abelian subgroup of $\F$.
\end{definition}

\begin{remark}
A cyclic tower $G:=\F*_C(C\oplus\Z)$ over $\F$ (with $f:C\hookrightarrow\F$) has an obvious group presentation. 
Suppose $f$ embeds $C$ isomorphically onto $C_{\F}(a)$.
Assume, without loss of generality, that $a\in\F$ is an element such that $C_{F}(a)=\langle a\rangle$, 
i.e. an element wihout proper roots. Then $G$ has the following presentation: $\langle \F, z \ | \ [z,a]\rangle$.      
\end{remark}

\begin{definition}
Let $G:=\F*_C(C\oplus\Z)$ be a cyclic tower over $\F$. Let $D$ be an infinite cyclic group and $f:C\oplus\Z\hookrightarrow C\oplus D$ 
be an injective morphism that is the identity on $C$, i.e. $f(c)=c$. Then the closure of $G$ with respect to $f$, $Cl_f(G)$, is the amalgamated free 
product $\F*_CB$ where $B$ is the group $C\oplus\Z\oplus D$ quotiented by the (normal) subgroup generated by $f(z)z^{-1}$.
\end{definition}

\begin{remark}
Let $G:=\F*_C(C\oplus\Z)$ be a cyclic tower over $\F$ with presentation $\langle \F, z \ | \ [z,a]\rangle$. 
Let $f:C\oplus\Z\hookrightarrow C\oplus D$ be an injective morphism with $f(c)=c$ and $f(z)=c^md^k$. 
Suppose $Cl_f(G)$ is the closure of $G$ with respect to $f$. Then: 
\begin{itemize}
 \item the injectivity of $f$ implies that $k$ must be non-zero;
 \item the closure of $G$ with respect to $f$, has an obvious presentation: $\langle \F,z,d \ | $ $\ [d,a] , a^md^kz^{-1}\rangle$;
 \item the group $G$ can be identified with the subgroup generated by $\F,z$ in its closure. 
\end{itemize}
\end{remark}

\begin{definition}
Let $G$ be a cyclic tower over $\F$ with presentation $\langle \F, z \ | \ [z,a]\rangle$. 
Then a test sequence with respect to $G$ is a sequence of morphisms $(h_n)_{n<\omega}:G\rightarrow\F$ 
with the following properties:
\begin{itemize}
 \item $h_n\upharpoonright\F=Id$ for all $n<\omega$;
 \item $h_n(z)=a^{k_n}$ with $(k_n)_{n<\omega}$ strictly increasing.
\end{itemize}
 
\end{definition}

\begin{definition}
Let $G$ be a cyclic tower over $\F$ and $Cl_f(G)$ be a closure (with respect to some $f$). We say that 
a test sequence $(h_n)_{n<\omega}:G\rightarrow \F$ extends to $Cl_f(G)$ if for all but finitely many $n$, $h_n$ extends  
to a morphism $h'_n:Cl_f(G)\rightarrow\F$.
\end{definition}

\begin{remark}
Let $G$ be a cyclic tower over $\F$ with presentation $\langle \F, z \ | \ [z,a]\rangle$. Then: 
\begin{itemize}
 \item a morphism from $G$ to $\F$ that is the identity on $\F$ is determined by the value it gives to $z$, which in turn must commute with $a$; 
 \item a test sequence with respect to $G$ can be identified with a sequence $(a^{k_n})_{n<\omega}$ of strictly increasing powers of $a$;
 \item if $f:C\oplus\Z\hookrightarrow C\oplus D$ is an injective morphism with $f(z)=c^md^k$ and $Cl_f(G)$ a closure of $G$ with respect to $f$. 
 Then a test sequence $(a^{k_n})_{n<\omega}$ 
 with respect to $G$ extends to $Cl_f(G)$ if and only if for all but finitely many $n$, $k_n\in m+k\Z$.
\end{itemize}
\end{remark}

The following theorem is a special case of Theorem 6.33 in \cite{FieldsByronSklinos}. 

\begin{theorem}\label{ExtFormalSolutions}
Let $G:=\langle \F, z \ | \ [z,a] \rangle$ be a cyclic tower over $\F$. 
Let $\phi(x,y)$ be a formula over $\F$
such that $\F\models\forall y\exists^{<\infty}x\phi(x,y)$. 

Suppose there exists a test sequence $(h_n)_{n<\omega}:G\rightarrow\F$ with respect to $G$ and a sequence $(b_n)_{n<\omega}$ of 
elements of $\F$ such that $\F\models \phi(b_n,h_n(z))$ for all $n$. 

Then there exists a closure $Cl_f(G):=\langle \F, z, d \ | \ [d,a], z^{-1}f(z)\rangle$ and a word  
$w=w(d,z,\bar{e})$ in $Cl_f(G)$ such that an infinite subsequence of $(h_n)_{n<\omega}$ extends to a 
sequence of morphisms $(h'_n)_{n<\omega}:Cl_f(G)\rightarrow\F$. Moreover, the extended sequence gives values 
to the couple $(w,z)$ that satisfy the formula $\phi(x,y)$, i.e. $\F\models \phi(h'_n(w),h'(z))$.
\end{theorem}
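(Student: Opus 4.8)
The plan is to run the Bestvina--Paulin limiting construction on the witnessing data, read off a closure of the tower from the structure of the resulting $\F$-limit group, and then extract the extended test sequence in Merzlyakov style.

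First I would package the witnesses together with the test sequence as a single sequence of morphisms. Put $L := G * \langle t\rangle$ and define $\mu_n : L \to \F$ by $\mu_n\upharpoonright G = h_n$ and $\mu_n(t) = b_n$, so that $\F \models \phi(\mu_n(t),\mu_n(z))$ for every $n$. Since $\mu_n(z) = a^{k_n}$ has $\F$-length tending to infinity, the scaling constants $\lambda_n := \max_{s}\lvert\mu_n(s)\rvert_\F$ also tend to infinity; rescaling the Cayley graph of $\F$ by $1/\lambda_n$, letting $L$ act through $\mu_n$, and passing to a non-principal ultralimit yields a nontrivial isometric action of $L$ on an $\R$-tree $T$, and hence, modding out by the stable kernel, an $\F$-limit group $L_\infty$ which is a quotient of $L$. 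Because $h_n\upharpoonright\F = \mathrm{Id}$ for all $n$, the subgroup $\F$ embeds into $L_\infty$ and acts elliptically on $T$; the relation $[z,a]=1$ together with $\lvert\mu_n(z)\rvert\asymp\lambda_n$ (here one uses that algebraicity of $\phi$ in $x$ keeps $\lvert b_n\rvert$ from dwarfing $k_n$) forces $z$ to act hyperbolically with axis through the fixed-point set of $\langle a\rangle$, so that $T\upharpoonright G$ is exactly the Bass--Serre tree of the cyclic amalgam $\F*_{\langle a\rangle}(\langle a\rangle\oplus\Z)$ defining the tower --- precisely the behaviour a test sequence is built to display.

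Next I would analyse $L_\infty$ relative to the distinguished subgroup $\F$ and element $z$ by means of the Rips machine and the graded abelian JSJ of $\F$-limit groups. The new generator $t$ enlarges the tree, so $T$ refines the edge of the amalgam above; a priori the transverse structure along the $z$-axis could be of surface, abelian, or edge type, but the hypothesis that $\phi$ is \emph{algebraic in $x$}, i.e. $\F\models\forall y\,\exists^{<\infty}x\,\phi(x,y)$, rules out any genuinely new modular (shortening) direction: were one present, precomposing $\mu_n$ with the associated modular automorphisms would manufacture unboundedly many solutions of $\phi(x,a^{k_n})$ for large $n$, contradicting algebraicity. What survives is only the ``cyclic'' direction, which pins $L_\infty$ down to a subgroup of a closure $Cl_f(G) = \langle\F,z,d\mid[d,a],\,a^md^kz^{-1}\rangle$ of the tower, with the image of $t$ equal to the value of a fixed word $w = w(d,z,\bar e)$. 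Proving exactly this --- that the only way an algebraic formula can be satisfied along a test sequence over a cyclic tower is through a cyclic closure, and that the representing word $w$ may be taken independent of $n$ --- is the technical heart of the argument, and is where one invokes the full limit-group machinery of \cite{FieldsByronSklinos} (indeed this is why the statement is quoted as a special case of their Theorem 6.33): the classification of limits of test sequences, the relative shortening and co-Hopf arguments, and a pigeonhole over the boundedly many solutions of $\phi(\cdot,a^{k_n})$ to make $w$ uniform.

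Finally I would extract the extended sequence. In $Cl_f(G)$ one has $z = a^m d^k$ and $[d,a]=1$, so a morphism $h':Cl_f(G)\to\F$ extending $h_n\upharpoonright\F = \mathrm{Id}$ and sending $z\mapsto a^{k_n}$ exists precisely when $a^{k_n-m}$ has a $k$-th root in $\F$, i.e. when $k\mid k_n-m$; passing to the infinite subsequence on which $k_n$ lies in the residue class $m\bmod k$ singled out by the limit analysis, one sets $h'_n\upharpoonright\F = \mathrm{Id}$, $h'_n(d) = a^{(k_n-m)/k}$, $h'_n(z) = a^{k_n}$, which is well defined and extends $h_n$. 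By uniformity of $w$ we get $h'_n(w) = w\bigl(a^{(k_n-m)/k},a^{k_n},\bar e\bigr) = b_n$ along this subsequence, whence $\F\models\phi(h'_n(w),h'_n(z))$ for all such $n$, as required. The genuinely hard step is the middle one --- showing the limit group is exactly a cyclic closure and carries no surface or higher-rank abelian piece --- with the uniformity of $w$ a secondary nuisance; the rest is routine Bestvina--Paulin bookkeeping and the arithmetic of $k$-th roots in a free group.
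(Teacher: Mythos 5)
The paper does not actually prove this theorem: it is quoted as a special case of Theorem 6.33 of \cite{FieldsByronSklinos}, so there is no in-paper argument to compare yours against. Your sketch does follow the general shape of the Sela/Byron--Sklinos machinery that underlies that result (test sequences, Bestvina--Paulin limits, a shortening argument played against algebraicity, closures of towers), and your final extraction step --- that a test sequence extends to $Cl_f(G)$ exactly when $k_n\equiv m\pmod{k}$, with $h'_n(d)=a^{(k_n-m)/k}$ --- agrees with the paper's own remark about when test sequences extend to closures. As a roadmap the outline points in the right direction.

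As a proof, however, it has a genuine gap, which you partly concede yourself: the entire load-bearing step --- that the limit analysis forces a \emph{cyclic} closure and a single word $w$ working uniformly along a subsequence --- is precisely the content of the cited Theorem 6.33, and your treatment of it is an appeal to that machinery rather than a derivation, so the proposal does not constitute an independent proof. Two specific assertions would also need repair even at the level of a sketch. First, you claim that algebraicity of $\phi$ in $x$ keeps $|b_n|$ from dwarfing $k_n$; algebraicity bounds the \emph{number} of solutions over each $y$ (uniformly, by compactness), but says nothing directly about their lengths, so without further argument the rescaled actions could be dominated by $t$ and the limiting tree could fail to restrict to the Bass--Serre tree of the amalgam on $G$. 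Second, you conclude $h'_n(w)=b_n$ along the subsequence; the theorem only asserts $\F\models\phi(h'_n(w),h'_n(z))$, and since $\mu_n$ need not factor through the quotient $L\to L_\infty$ for any fixed $n$, the word $w$ produced by the limit need only select \emph{some} solution of $\phi(\cdot,a^{k_n})$, not the one you started with. Neither issue is fatal to the strategy --- both are handled inside the cited machinery --- but as written they are unjustified steps.
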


We can now prove as a corollary that the diagonal subgroup of $C_{\F}(e_1)\times C_{\F}(e_2)$, i.e. the cyclic group $\langle (e_1,e_2)\rangle$ 
is not definable in $\F$.

\begin{corollary}
The subgroup $\Gamma:=\langle (e_1,e_2)\rangle$ of $C_{\F}(e_1)\times C_{\F}(e_2)$ is not definable in $\F$. 
\end{corollary}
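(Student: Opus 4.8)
The plan is to argue by contradiction, assuming $\Gamma = \langle (e_1,e_2)\rangle$ is definable in $\F$ by some formula, and then to derive a contradiction using Theorem~\ref{ExtFormalSolutions}. First I would set up the relevant cyclic tower: take $G := \langle \F, z \mid [z,e_1]\rangle$, so that $z$ plays the role of a ``generic'' element commuting with $e_1$; a morphism $G\to\F$ fixing $\F$ is determined by where it sends $z$, which must be a power of $e_1$. I would then translate the definability assumption into a first-order statement about $\F$: the hypothesis ``$(x_1,x_2)\in\Gamma$'' is equivalent to a conjunction saying $x_1\in\langle e_1\rangle$, $x_2\in\langle e_2\rangle$, and the two exponents agree. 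The key point is that if $\Gamma$ is definable, then for each $n$ the pair $(e_1^{\,n}, e_2^{\,n})$ satisfies this formula, while $(e_1^{\,n}, e_2^{\,m})$ does not when $n\neq m$. I want to package this into a formula $\phi(x,y)$ over $\F$ in a single variable $x$ (an element meant to lie in $\langle e_2\rangle$) and a parameter-variable $y$ (an element meant to be a power of $e_1$) such that $\phi$ expresses ``$x = e_2^{\,n}$ where $e_1^{\,n} = y$'' — i.e. $x$ is the ``same power'' of $e_2$ as $y$ is of $e_1$. Crucially this $\phi$ satisfies $\F\models \forall y\, \exists^{<\infty} x\, \phi(x,y)$: for each value of $y$ there is at most one such $x$ (in fact at most one, when $y$ is a genuine power of $e_1$, and possibly none otherwise).

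Next I would build the test sequence and the witnesses. Let $h_n : G\to\F$ be the morphism with $h_n(z) = e_1^{\,n}$ (or $e_1^{\,k_n}$ for any strictly increasing $k_n$), and set $b_n := e_2^{\,k_n}$. Then $\F\models \phi(b_n, h_n(z))$ for all $n$, since $b_n$ is exactly the matching power of $e_2$. So the hypotheses of Theorem~\ref{ExtFormalSolutions} are met, and I obtain a closure $Cl_f(G) = \langle \F, z, d \mid [d,e_1],\, z^{-1}f(z)\rangle$ with $f(z) = e_1^{\,m} d^{\,k}$, $k\neq 0$, and a word $w = w(d,z,\bar e)$ such that an infinite subsequence of the $h_n$'s extends to $h'_n : Cl_f(G)\to\F$ with $\F\models \phi(h'_n(w), h'_n(z))$. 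Now I use the description of which test sequences extend: $(e_1^{\,k_n})$ extends to $Cl_f(G)$ iff $k_n \in m + k\Z$ for almost all $n$. So along the surviving subsequence we have $h'_n(z) = e_1^{\,k_n}$ with $k_n \equiv m \pmod{k}$, and $\phi(h'_n(w), h'_n(z))$ forces $h'_n(w) = e_2^{\,k_n}$.

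The final contradiction comes from analyzing $h'_n(w)$ as a function of $n$. The word $w$ is fixed, living in $Cl_f(G)$, and its image under $h'_n$ is obtained by substituting the (fixed) images of $\bar e$, of $d$ — which must be a power of $e_1$ commuting with $e_1$, but more importantly fixed up to the data of the closure — and of $z = e_1^{\,k_n}$. Since $d$ satisfies $[d,e_1]$ it lies in $\langle e_1\rangle$, and along the extension $d$ is sent to some fixed power of $e_1$ (the closure construction pins down $h'_n(d)$ essentially independently of $n$, or at worst $h'_n(d)$ ranges over $e_1$-powers determined by $k_n$ via the relation $e_1^{\,m}(h'_n(d))^{k} = e_1^{\,k_n}$). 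Plugging these into the fixed word $w(d,z,\bar e)$, the value $h'_n(w)$ is a \emph{fixed word} in $e_1$-powers and $\bar e$, evaluated at arguments that grow in a controlled (roughly linear in $k_n$) way — so $h'_n(w)$ either stabilizes or grows in a way tied to $\langle e_1\rangle$ and $\bar e$, and in particular cannot equal $e_2^{\,k_n}$ for infinitely many $n$ unless $w$ is (conjugate to) a power of $e_2$ not involving $e_1$ or $z$ at all; but then $h'_n(w)$ would be constant in $n$, contradicting $h'_n(w) = e_2^{\,k_n}$ with $k_n\to\infty$. Either way we reach a contradiction, so $\Gamma$ is not definable.

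The main obstacle I expect is the last step: extracting a genuine contradiction from the structure of the word $w$. One must rule out, uniformly in the subsequence, that $w$ could ``happen'' to equal $e_2^{\,k_n}$; this requires understanding how $h'_n(w)$ depends on $n$, using that $z\mapsto e_1^{\,k_n}$ is the only part that varies while $w$, $\F$, and the closure data are fixed. The cleanest route is probably to observe that the function $n\mapsto h'_n(w)$ lands in the subgroup $\langle \F, d\rangle = \F$ (since $d\in\langle e_1\rangle\subseteq\F$) and is given by substituting $e_1^{\,k_n}$ for $z$ into a fixed word; looking at how word length or the abelianization grows, or passing to the retraction killing $z$ and $d$, forces $h'_n(w)$ to be eventually of bounded complexity or to lie outside $\langle e_2\rangle$, contradicting $h'_n(w) = e_2^{\,k_n}$. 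Making this ``growth/complexity'' argument precise — perhaps by a suitable choice of $k_n$ ensuring the $k_n \equiv m \pmod k$ constraint is compatible with a sequence witnessing divergence — is where the real work lies.
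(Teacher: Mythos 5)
Your proposal is essentially the paper's own proof with the roles of $e_1$ and $e_2$ interchanged (the paper builds the tower over $e_2$, sends $z\mapsto e_2^{k_n}$, takes witnesses $e_1^{k_n}$, and derives the contradiction from $h'_n(w)=e_1^{k_n}$), a difference that is immaterial by symmetry of $\Gamma$. The final step you flag as \emph{where the real work lies} is exactly the paper's closing observation, and the exponent-sum idea you mention does clinch it: $h'_n(w)=w(e_1^{l_n},e_1^{k_n},\bar e)$ has $e_2$-exponent sum bounded independently of $n$ (only the fixed occurrences of $\bar e$ in $w$ contribute), so it cannot equal $e_2^{k_n}$ once $k_n$ is large.
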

\begin{proof}
Suppose for a contradiction that the formula $\phi(x,y)$ over $\F$ defines $\Gamma$. We apply Theorem \ref{ExtFormalSolutions} 
to the cyclic tower $G:=\langle \F, z \ | \ [z,e_2]\rangle$ and the formula $\phi(x,y)$. 
We first see that $\F\models\forall y\exists^{<\infty} x \phi(x,y)$. 
Moreover, for the test sequence $(e_2^n)_{n<\omega}$ (with respect to $G$)  
there exists a sequence of elements of $\F$, namely $(e_1^n)_{n<\omega}$, such that $\F\models\phi(e_1^n,e_2^n)$ for all $n$. Thus, there exists a closure 
$Cl_{f}(G):=\langle \F,z,d \ | \ [d,e_2], z^{-1}f(z)\rangle$ of $G$ and a word $w=w(d,z,\bar{e})$ such that a subsequence 
$(e_2^{k_n})_{n<\omega}$ of $(e_2^n)_{n<\omega}$ extends to $Cl_{f}(G)$ and moreover if $(h'_n)_{n<\omega}:Cl_f(G)\rightarrow\F$ is the extended 
sequence, then $\F\models\phi(h'_n(w(d,z,\bar{e})), e_2^{k_n}))$ for all $n$. 

We observe that, since $d$ and $z$ commute with $e_2$ in $G$, for each $n$, $h'_n(d)$ and $h'_n(z)$ must be powers of $e_2$. 
On the other hand, in the word $w(d,z,\bar{e})$, the letter $e_1$ appears finitely many times. Since the only solution of $\phi(x,e_2^{k_n})$ 
is $e_1^{k_n}$, and by definition $h'_n(z)=e_2^{k_n}$, we must have that $h'_n(w(d,z,\bar{e})=w(e_2^{l_n},e_2^{k_n},\bar{e})=e_1^{k_n}$.  
But for $n$ large enough this is impossible. 

\end{proof}

Theorem \ref{Maintheorem} follows directly from  the Corollary above and Proposition \ref{PropReduction}.

\ \\ \\ \\ \\ \\ \\ \\
\begin{minipage}{0.3\textwidth}
Anand Pillay\\
University of Notre Dame, IN 46556, USA\\
apillay@nd.edu
\end{minipage}
\begin{minipage}{0.2\textwidth}
\
\end{minipage}
\begin{minipage}{0.3\textwidth}
Rizos Sklinos\\
Universit\'{e} Lyon 1, France\\
rizozs@gmail.com
\end{minipage}

\end{document}